\theoremstyle{plain}                       
\newtheorem{theorem}{Theorem}          
\newtheorem{lemma}[theorem]{Lemma}
\theoremstyle{remark}                      
\newtheorem*{acknow}{Acknowledgements}          
\newcommand\pair[1]{\langle #1\rangle}
\newcommand\ppair[1]{(#1)}
\DeclareMathOperator\supp{supp}
\DeclareMathOperator\linsp{span}
\DeclareMathOperator\clsp{\overline{\linsp}}
\newcommand{\lt}{L}                        
\newcommand{\rt}{R}
\newcommand\col{\colon}
\newcommand\sub{\subseteq}
\newcommand\bus{\supseteq}
\newcommand{\complex}{\mathbb{C}}          
\newcommand{\A}{\mathrm{A}}              
\newcommand{\B}{\mathrm{B}}              
\newcommand{\Bred}{\mathrm{B}_r}              
\newcommand{\set}[2]{\{\,{\textstyle#1};\,{\textstyle #2}\,\}}
\newcommand{\inv}{^{-1}}                   
\newcommand\conj{\overline}                
\newcommand\lone{\mathrm{L}^1}
\newcommand\ltwo{\mathrm{L}^2}
\newcommand\linfty{\mathrm{L}^\infty}
\newcommand{\vn}{\mathrm{VN}}
\newcommand\cstarred{\mathrm{C}^*_r}
\newcommand{\C}{\mathrm{C}}
\newcommand{\ucb}{\mathrm{UCB}}
\newcommand{\cop}{\Gamma}
\newcommand{\id}{\mathrm{id}}
\newcommand{\M}{\mathrm{M}}
\newcommand\ot{\otimes}
\newcommand\dual[1]{\widehat{#1}}
\begin{document}

\title{Subgroups and strictly closed invariant C*-subalgebras}

\author{Pekka Salmi}

\address{Deparment of Mathematical Sciences, 
University of Oulu, PL~3000, FI-90100 Oulun yliopisto, Finland}

\email{pekka.salmi@iki.fi}

\subjclass[2010]{Primary 46L99, Secondary 22D15, 22D25}

\keywords{locally compact group, strict topology, multiplier algebra, 
left invariant, translation invariant, group C*-algebra}

\begin{abstract}
We characterise the strictly closed left invariant C*-subalgebras 
of the C*-algebra $\C_b(G)$ of bounded continuous functions 
on a locally compact group $G$. On the dual side, 
we characterise the strictly closed invariant C*-subalgebras 
of the multiplier algebra of the reduced group C*-algebra $\C_r^*(G)$ 
when $G$ is amenable. In both cases, these C*-subalgebras correspond to 
closed subgroups of $G$.
\end{abstract}

\maketitle

\section{Introduction}

Takesaki and Tatsuuma proved in \cite{takesaki-tatsuuma:duality-subgroups}
that left translation invariant von Neumann subalgebras 
of $\linfty(G)$ for  a locally compact group $G$  
correspond to closed subgroups $H$ of $G$. Namely, 
the left invariant subalgebra is
formed by functions in $\linfty(G)$
that are constant on right cosets of $H$. 
Takesaki and Tatsuuma proved also the dual result:
invariant von Neumann subalgebras of 
the group von Neumann algebra $\vn(G)$ 
correspond to closed subgroups of $G$.
In this case, the subalgebra consists of operators in $\vn(G)$ supported by the 
corresponding closed subgroup. 

In the C*-algebraic setting, Lau and Losert 
proved in \cite{lau-losert:complemented} 
that left translation invariant C*-subalgebras of C*-algebra $\C_0(G)$ 
of continuous functions on $G$  vanishing at infinity
correspond  to \emph{compact} subgroups $H$. 
Similarly as in the case of $\linfty(G)$, 
the subalgebra consists of the functions
in $\C_0(G)$ that are constants on right cosets of $H$. 
The dual version of this result is shown in \cite{salmi:compact-subgroups}
for amenable locally compact groups:
invariant C*-subalgebras of the reduced group C*-algebra $\C_r^*(G)$ 
correspond to \emph{open} subgroups of $G$, the C*-subalgebra 
being the collection of elements in $\C_r^*(G)$ 
supported by the open subgroup.

We see that if we look at the C*-algebraic side of things,
there is the constraint that the subgroups are either
compact or open (which are dual to each other).  
To go beyond these constraints, we must look into the multiplier 
algebras. That is, we replace $\C_0(G)$ and $\cstarred(G)$ with their
multiplier algebras; in the former case this is the 
C*-algebra $\C_b(G)$ of all bounded continuous functions on $G$.
However, these multiplier algebras  contain lots of invariant 
C*-subalgebras that are not associated with subgroups:
for example, the C*-subalgebra of weakly almost periodic functions in 
$\C_b(G)$. 
If we want an association with subgroups, we should consider 
invariant C*-subalgebras that are closed under the 
strict topology.
Recall that the strict topology on the multiplier algebra $\M(A)$ 
of a C*-algebra $A$ is the topology 
generated by the seminorms $x\mapsto\| xa\|$, 
$x\mapsto \|ax\|$, where $a$ runs through the elements of $A$. 

In this note, we shall prove that there is a one-to-one correspondence 
between closed subgroups of a locally compact group $G$ 
and strictly closed left invariant C*-subalgebras of $\C_b(G)$.
We shall also prove that for amenable $G$ 
there is a one-to-one correspondence between closed subgroups 
and strictly closed invariant C*-subalgebras of 
the multiplier algebra of the reduced group C*-algebra $\C_r^*(G)$.
These results show that the strict topology, rather than the norm
topology, is often the right topology for C*-algebras.

\section{Stricly closed left invariant C*-subalgebras of 
         bounded continuous functions}

In this section we characterise the strictly closed left invariant 
C*-subalgebras of $\C_b(G)$ where $G$ is a locally compact group.
These correspond to the closed subgroups of $G$. 
The proof follows along the same lines as that of 
\cite[Lemma~12]{lau-losert:complemented}, but of course 
the use of strict topology requires some subtlety.

We let $\lt_s$ and $\rt_s$ denote the left and right translation
operators: $\lt_sf(t) = f(st)$, $\rt_s f(t) = f(ts)$ where
$s,t\in G$ and $f\col G\to \complex$.

\begin{theorem}
Let $G$ be a locally compact group. 
There is  a one-to-one correspondence between 
closed subgroups $H$ of $G$ and 
strictly closed, left invariant C*-subalgebras $X$ of $\C_b(G)$:
\begin{align}
X &= \set{f\in \C_b(G)}{\rt_s f = f \text{ for every }s\in H} \label{eq:X} \\
H &= \set{s\in G}{\rt_s f = f \text{ for every }f\in X}.  \label{eq:H}
\end{align}
Moreover, $H$ is normal if and only if the corresponding $X$ 
is right invariant.
\end{theorem}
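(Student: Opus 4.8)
The plan is to check that the maps $H\mapsto X$ and $X\mapsto H$ of \eqref{eq:X}--\eqref{eq:H} are well defined and mutually inverse, and then to deduce the normality statement by a direct computation. (The claimed identity $X=\set{f}{\rt_sf=f\text{ for every }s\in H}$ forces $X$ to contain the constant functions, so I take all the C*-subalgebras below to be unital.) For well-definedness: given a closed subgroup $H$, the set $X$ of \eqref{eq:X} is exactly the image of $\C_b(G/H)$ under $h\mapsto h\circ q$, where $q\col G\to G/H$ is the quotient onto the locally compact Hausdorff left coset space; so $X$ is a unital C*-subalgebra, it is left invariant because $\rt_s$ and $\lt_u$ commute while membership in $X$ constrains only right translates, and it is strictly closed because a bounded strictly convergent net converges uniformly on compacta, hence pointwise, so the relations $\rt_sf=f$ survive in the limit. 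Given a strictly closed left invariant C*-subalgebra $X$, the set $H$ of \eqref{eq:H} is a subgroup since $\rt_{st}=\rt_s\rt_t$ and $\rt_e=\id$, and it is closed since $\set{s}{\rt_sf=f}=\bigcap_{t\in G}\set{s}{f(ts)=f(t)}$ is an intersection of closed sets for each $f$.

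\textbf{The straightforward inclusions.}
The inclusions $H\sub\set{s}{\rt_sf=f\text{ for every }f\in X}$ (when $X$ is built from $H$) and $X\sub\set{f}{\rt_sf=f\text{ for every }s\in H}$ (when $H$ is built from $X$) are immediate from the definitions. For the reverse inclusion in the first case I would use a separation argument: if $s\notin H$, then the cosets $eH$ and $sH$ are distinct points of the locally compact Hausdorff space $G/H$, so there is $h\in\C_c(G/H)$ with $h(eH)\ne h(sH)$; then $f=h\circ q$ lies in $X$ and $(\rt_sf)(e)=f(s)\ne f(e)$, so $s$ fails the defining condition of \eqref{eq:H}, as required.

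\textbf{The hard inclusion, and where the difficulty lies.}
It remains to show that if $f\in\C_b(G)$ satisfies $\rt_sf=f$ for all $s\in H$ then $f\in X$; here I would follow \cite[Lemma~12]{lau-losert:complemented}, with extra care for the strict topology. Write $X=\C(Z)$ for its Gelfand spectrum $Z$; the inclusion $X\sub\C_b(G)$ dualizes to a continuous map $p\col G\to Z$ with dense range such that $f\in X$ iff $f=\widetilde f\circ p$ for some $\widetilde f\in\C(Z)$. Using left invariance, one checks that the fibres of $p$ are exactly the left cosets of $H$, so $p=\bar p\circ q$ with $\bar p$ continuous and injective, and hence $X=\set{a\circ q}{a\in\mathcal A}$, where $\mathcal A:=\set{\widetilde f\circ\bar p}{\widetilde f\in\C(Z)}$ is a unital closed $*$-subalgebra of $\C_b(G/H)$ that \emph{separates the points of} $G/H$. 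Now take $f$ with $\rt_sf=f$ for all $s\in H$ and write $f=h\circ q$ with $h\in\C_b(G/H)$, which we may assume real-valued. Since $\mathcal A$ is unital and separates points, its restriction to any compact $C\sub G/H$ is dense in $\C(C)$ by Stone--Weierstrass; truncating the approximants to the interval $[-\|h\|-1,\|h\|+1]$ by continuous functional calculus (this keeps them in $\mathcal A$ and leaves them unchanged on $C$ once the error there drops below $1$) produces a bounded net $(a_\alpha)$ in $\mathcal A$ with $a_\alpha\to h$ uniformly on the compact subsets of $G/H$. Then $a_\alpha\circ q\in X$, and since $q$ maps compacta into compacta, $a_\alpha\circ q\to h\circ q$ uniformly on compacta of $G$ while staying uniformly bounded; hence $a_\alpha\circ q\to h\circ q$ strictly, and as $X$ is strictly closed, $f=h\circ q\in X$. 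This completes the bijection. I expect this last step to be the main obstacle: Stone--Weierstrass yields density of $\mathcal A$ only on compact sets --- in general it fails in norm, as the left invariant, norm-closed, but \emph{not} strictly closed algebra of weakly almost periodic functions shows --- so one must pass to a strict limit, and keeping the approximants uniformly bounded (the role of the functional-calculus truncation) is precisely the ``subtlety'' alluded to in the text.

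\textbf{Normality versus right invariance.}
This is a direct computation once the correspondence is in place. If $H$ is normal, then for $f\in X$, $u\in G$, $h\in H$ and $t\in G$,
\[
\rt_h(\rt_uf)(t)=f(thu)=f\bigl((tu)(u\inv hu)\bigr)=f(tu)=(\rt_uf)(t),
\]
since $u\inv hu\in H$ and $f\in X$; hence $\rt_uf\in X$, and applying this to $u\inv$ as well shows $\rt_u$ maps $X$ onto $X$, so $X$ is right invariant. Conversely, if $X$ is right invariant, then for $s\in H$, $u\in G$ and $f\in X$ we have $\rt_{u\inv}f\in X$, so $\rt_s(\rt_{u\inv}f)=\rt_{u\inv}f$, i.e.\ $f(tsu\inv)=f(tu\inv)$ for all $t$; replacing $t$ by $tu$ gives $\rt_{usu\inv}f=f$. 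As $f\in X$ was arbitrary, $usu\inv\in H$, and $H$ is normal.
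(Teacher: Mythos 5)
Your argument has the same skeleton as the paper's (define $H$ by \eqref{eq:H}, pass to the coset space, and show that point separation --- which you derive from left invariance exactly as the paper does --- forces $X$ to exhaust the $H$-invariant functions), but the hard step is carried out by a genuinely different and more self-contained route. The paper transports $X$ into $\C_b(G/H)$ via $\pi(f)(s)=f(sH)$, needs Taylor's theorem to upgrade strict continuity of $\pi$ on bounded sets to strict continuity everywhere (so that $\pi\inv(X)$ is strictly closed), and then quotes Glicksberg's strict Stone--Weierstrass theorem. You instead realise $X=\C(Z)$ through its Gelfand spectrum, identify the fibres of $p\col G\to Z$ with the cosets of $H$ (your use of left invariance here is correct), and then prove the needed approximation by hand: classical Stone--Weierstrass on each compact subset of $G/H$, a functional-calculus truncation to keep the approximants uniformly bounded, and the elementary half of Buck's theorem (a bounded net converging uniformly on compacta converges strictly), applied back in $\C_b(G)$ so that only strict closedness of $X$ there is used. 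This bypasses both Taylor's and Glicksberg's theorems and in effect reproves the special case of the strict Stone--Weierstrass theorem that the paper cites; the truncation is exactly the right device for the boundedness issue. Your normality computation agrees with the paper's.

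The one genuine gap is the blanket unitality assumption. The theorem quantifies over all strictly closed left invariant C*-subalgebras, and your justification (``the claimed identity forces $X$ to contain the constants'') presupposes the conclusion, while unitality is load-bearing in your proof: you need point evaluations to restrict to characters of $X$ (so that $p$ exists) and you need $1\in\mathcal{A}$ both for Stone--Weierstrass and for the truncation. The repair is short: show that every nonzero strictly closed left invariant C*-subalgebra contains $1$. Pick $f\in X$ nonzero and set $g=\conj{f}f\ge 0$; given a compact $K\sub G$, finitely many left translates of $g$ (which lie in $X$) sum to $F\in X$ with $0\le F\le n$ and $F\ge\delta>0$ on $K$, and then $1-(1-F/n)^m\in X$ takes values in $[0,1]$ and is uniformly close to $1$ on $K$ for large $m$; such elements form a bounded net converging strictly to $1$, so $1\in X$ by strict closedness. (The algebra $X=\{0\}$ remains a degenerate exception, but it falls equally outside the paper's appeal to Glicksberg, whose hypotheses implicitly require nonvanishing.) A cosmetic point: in checking that the algebra \eqref{eq:X} is strictly closed you argue with \emph{bounded} strictly convergent nets, but nets witnessing strict closure need not be bounded; the fix is immediate, since strict convergence implies pointwise convergence outright (pair with $a\in\C_0(G)$ not vanishing at the given point), which is all you need.
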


\begin{proof}
Suppose that $H$ is a closed subgroup of $G$ and 
let $X$ be defined by \eqref{eq:X}. 
Then $X$ is obviously a left invariant C*-subalgebra of $\C_b(G)$. 
Suppose $f$ is in the strict closure of $X$ in $\C_b(G)$. 
Since strict convergence implies pointwise convergence,
it follows that also $f$ satisfies 
$\rt_s f = f$. So $X$ is strictly closed.

Conversely, suppose that a strictly closed, left invariant C*-subalgebra
$X$ is given and define $H$ by \eqref{eq:H}. 
Then $H$ is a closed subgroup of $G$. 
Let $G/H$ denote the homogeneous space of right cosets of $H$ 
and define 
\[
\pi\col \C_b(G/H)\to \C_b(G), \qquad \pi(f)(s) = f(sH).
\]
Using the fact that on bounded sets the strict topology 
agrees with the compact--open topology \cite{buck:strict}, 
it is easy to see that $\pi$ is strictly continuous on
bounded sets.
Taylor has shown in \cite[Corollary 2.7]{taylor:strict}
that the strongest locally convex topology agreeing with the 
strict topology on norm-bounded sets is actually the 
strict topology itself.
Consequently, a linear map from a multiplier algebra to 
a locally convex space is strictly continuous if it is strictly 
continuous on bounded sets. 
Therefore, $\pi$ is in fact strictly continuous.
Moreover, $\pi$ is a $*$-isomorphism from $\C_b(G/H)$ onto 
\[
Y := \set{f\in \C_b(G)}{\rt_s f = f \text{ for every }s\in H}.
\]
Obviously $X\sub Y$. 
To show that $X = Y$, 
we shall apply the strict Stone--Weierstrass theorem due 
to Glicksberg~\cite{glicksberg:strict-stone-weierstrass}:
if a strictly closed C*-subalgebra $X$ of $\C_b(\Omega)$
separates points of the locally compact space $\Omega$, 
then $X = \C_b(\Omega)$.

First of all, $\pi\inv(X)$ is a strictly closed C*-subalgebra
of $\C_b(G/H)$  (because $\pi$ is strictly continuous).
To show that $\pi\inv(X)$ separates points of $G/H$, let 
$s,t\in G$ with $s\inv t \notin H$. 
By the definition of $H$, there is $f$ in $X$ such that
$f(us\inv t)\ne f(u)$ for some $u$ in $G$. 
Using the left invariance of $X$, we have $f(s)\ne f(t)$
and so $\pi\inv(f)(sH)\ne \pi\inv(f)(tH)$.
Then it follows from  the strict Stone--Weierstrass theorem
that $X = Y$.

This argument also shows that if we start from $X$ and 
define $H$ by \eqref{eq:H}, then $X_H$ derived from $H$ by
\eqref{eq:X} is equal to $X$. 
On the other hand, if we start from $H$ and define $X$
by  \eqref{eq:X}, then $X$ is $*$-isomorphic to $\C_b(G/H)$. 
Now if $s\notin H$, then there is $f$ in $\C_b(G/H)$ such 
that $f(sH) \ne f(H)$. Taking in count the identification of 
$X$ with $\C_b(G/H)$, this means that $s$ is not in the 
subgroup $H_X$ arising from $X$ via \eqref{eq:H}. 
That is, $H = H_X$. 

It remains to show that $H$ is normal if and only if the corresponding
$X$ is right invariant. Now $X$ is right invariant if and only if 
$\rt_h\rt_s f = \rt_s f$ for every $f\in X$, $s\in G$ and $h\in H$. 
But $\rt_h\rt_s f = \rt_s \rt_{s\inv h s} f$ 
so $X$ is right invariant if and only if 
$\rt_{s\inv h s} f = f$ for every $f\in X$, $s\in G$ and $h\in H$. 
The last condition is equivalent with normality of $H$.
\end{proof}

\section{Stricly closed invariant C*-subalgebras of the multiplier 
algebra of the group C*-algebra}

Let $G$ be an amenable locally compact group,
and let $\lambda$ be the left regular representation of $G$ on $\ltwo(G)$.
We shall consider the reduced group C*-algebra $\C_r^*(G)$ 
generated by $\lambda(\lone(G))$.
Of course since we take $G$ amenable, the reduced group C*-algebra
of $G$ is isomorphic with the universal group C*-algebra.
The dual space of $\C_r^*(G)$ is the reduced Fourier--Stieltjes 
algebra $\B_r(G)$, which is a Banach algebra under pointwise 
multiplication of functions
(Eymard introduced the reduced Fourier--Stieltjes algebra 
in \cite{eymard:fourier}, denoting it by $\B_\rho(G)$).
In the amenable case, $\Bred(G)$ coincides with the 
Fourier--Stieltjes algebra $\B(G)$ consisting of 
coefficients of all unitary representations of $G$.

We shall also need the group von Neumann algebra $\vn(G)$,
which is the von Neumann algebra generated by $\lambda$. 
The predual of $\vn(G)$ is the Fourier algebra $\A(G)$
consisting of the coefficient functions of $\lambda$.
The multiplier algebra $\M(\C_r^*(G))$ is identified 
with the idealiser of $\C_r^*(G)$ in $\vn(G)$:
\[
\M(\C_r^*(G)) = \set{x\in\vn(G)}{xa,ax\in\C_r^*(G) 
\text{ for every }a\in \C_r^*(G)}.
\]
We note that elements in $\Bred(G)$ have unique strictly 
continuous extensions to functionals on $\M(\C_r^*(G))$, 
and we shall use these extensions without additional notation.

Define a unitary operator $W$ on $\ltwo(G\times G)$ by
\[
W\xi(s,t) = \xi(ts,t)\qquad(\xi\in\ltwo(G\times G,\, s,t\in G).
\]
Then 
\[
\cop\col x\mapsto W^*(1\ot x)W \qquad (x\in \M(\cstarred(G)))
\]
maps $\M(\cstarred(G))$ to 
$\M(\cstarred(G)\ot\cstarred(G))\sub \B(\ltwo(G\times G))$
(e.g.\ $\cop(\lambda(s)) = \lambda(s)\ot\lambda(s)$ for $s\in G$).
We have an action of $\Bred(G)$ on $\M(\cstarred(G))$ via 
\[
u.x = (u\otimes \id)\cop(x)\qquad 
(u\in\Bred(G),\, x\in\M(\cstarred(G)))
\]
where $u\ot\id$ denotes the unique strictly continuous extension 
of the linear map from $\cstarred(G)\ot\cstarred(G)$ to $\cstarred(G)$
determined by $a\ot b\mapsto u(a)b$.
We say that a C*-subalgebra $X$ of $\M(\C_r^*(G))$ 
is \emph{invariant} if $u.x\in X$ for every $x\in X$ and 
$u\in \B_r(G)$. 
To see that this agrees with the notion of invariance 
used in $\C_b(G)$ (i.e. translation invariance),
note that 
\[
\pair{u.x, v} = \pair{x, uv} \qquad(x\in\M(\C_r^*(G)),\, u,v\in\B_r(G)).
\]
Now if $G$ is abelian, 
$\M(\C_r^*(G))\cong \C_b(\dual G)$ and $X\sub \C_b(\dual G)$ is
invariant if and only if it is translation invariant
(if $u = \chi$ is a character and $x$ corresponds to 
a continuous function $f\in \C_b(\dual G)$, 
then $u.x$ corresponds to the translation of $f$ by $\chi$). 

Since $G$ is amenable, there exists a \emph{summing net} 
$\{F_\alpha\}$ that satisfies the following properties
\begin{itemize}
\item each $F_\alpha$ is nonnull and compact
\item $F_\alpha\sub F_\beta$ if $\alpha \le \beta$
\item $G = \bigcup_{\alpha} F_\alpha$
\item $|sF_\alpha\triangle F_\alpha|/|F_\alpha|\to 0$ uniformly on compact sets
\end{itemize}
(see \cite[Theorem 4.16]{paterson:amenability}).
Here we use $|\cdot|$ to denote the left Haar measure of a set
and $\triangle$ to denote the symmetric difference of sets.

Put $\zeta_\alpha = |F_\alpha|^{-1/2}\, 1_{F_\alpha}\in\ltwo(G)$
and $u_\alpha= \zeta_\alpha * \check\zeta_\alpha$,
where $\check\zeta_\alpha(s) = \zeta_\alpha(s\inv)$,
so that $\pair{u_\alpha, x} = \ppair{x\zeta_\alpha \mid \zeta_\alpha}$
for every $x\in\cstarred(G)$.  
Then each $u_\alpha$ is compactly supported and 
$(u_\alpha)$ is a bounded approximate identity in $\A(G)$.

\begin{lemma} \label{lemma:strict-approx}
For every $x\in \M(\cstarred(G))$, 
$u_\alpha. x \to x$ in the strict topology.
\end{lemma}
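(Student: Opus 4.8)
The plan is to show strict convergence by testing against the two defining families of seminorms, i.e. to show that $\|(u_\alpha.x)a - xa\| \to 0$ and $\|a(u_\alpha.x) - ax\| \to 0$ for every $a \in \cstarred(G)$. By a standard $\varepsilon/3$-argument using that $(u_\alpha)$ is \emph{bounded} in $\Bred(G)$ (so the operators $x \mapsto u_\alpha.x$ are uniformly bounded on $\M(\cstarred(G))$), it suffices to verify convergence for $a$ in a norm-dense subset of $\cstarred(G)$, say $a \in \lambda(\lone(G))$, and even for $a = \lambda(f)$ with $f \in \C_c(G)$. It will also be convenient to first treat the case $x = \lambda(s)$, a group element, and then bootstrap: since $u_\alpha.\lambda(s) = u_\alpha(\lambda(s))\,\lambda(s)$ and $u_\alpha(\lambda(s)) = (\zeta_\alpha * \check\zeta_\alpha)(s) = (\lambda(s)\zeta_\alpha \mid \zeta_\alpha) \to 1$ uniformly for $s$ in compact sets (this is exactly the Følner condition $|sF_\alpha \triangle F_\alpha|/|F_\alpha| \to 0$ uniformly on compacta), one gets $u_\alpha.\lambda(f) \to \lambda(f)$ in norm for $f \in \C_c(G)$, hence for all $f \in \lone(G)$, i.e.\ $u_\alpha.a \to a$ in norm for $a \in \cstarred(G)$.

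Now for general $x \in \M(\cstarred(G))$ and $a \in \cstarred(G)$, I would exploit the module property of the action: $\cop$ is a $*$-homomorphism, so $\cop(xa) = \cop(x)\cop(a)$, and applying $u_\alpha \ot \id$ gives the Leibniz-type identity
\[
u_\alpha.(xa) = (u_\alpha \ot \id)\bigl(\cop(x)\cop(a)\bigr).
\]
The key point is to compare this with $(u_\alpha.x)\,(u_\alpha.a)$ and with $x\,(u_\alpha.a)$. Using that $u_\alpha \ot \id$ is a bimodule map over $\C_r^*(G) \ot \C_r^*(G)$ acting on the second leg and multiplicative-on-the-first-leg only in a limited sense, the cleanest route is: $u_\alpha.(xa) = (u_\alpha \ot \id)\cop(x)\,(u_\alpha \ot \id)$ does \emph{not} split, so instead write $(u_\alpha.x)a = (u_\alpha.x)(u_\alpha.a) + (u_\alpha.x)(a - u_\alpha.a)$ and bound the second term by $\|u_\alpha.x\|\,\|a - u_\alpha.a\| \le \|u_\alpha\|_{\Bred}\,\|x\|\,\|a - u_\alpha.a\| \to 0$. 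So it remains to show $(u_\alpha.x)(u_\alpha.a) \to xa$, and since $u_\alpha.a \to a$ in norm it is enough to show $(u_\alpha.x)(u_\alpha.a) - (u_\alpha.x)(u_\alpha . a)$... — more precisely, $(u_\alpha.x)(u_\alpha.a) = u_\alpha.(x?a)$; here one uses that $\cop(x)$ commutes appropriately so that $(u_\alpha \ot \id)(\cop(x)\cop(a)) = $ slice identities. The honest statement is: because $u_\alpha \ot \id$ is a $\C_r^*(G) \ot \C_r^*(G)$-bimodule map on the second variable, one has $(u_\alpha.x)(u_\alpha.a)$ equals $(u_\alpha \ot \id)\bigl(\cop(x)(1 \ot (u_\alpha.a))\bigr)$; combining with $\cop$ multiplicativity and $\cop(a)(1\ot \text{stuff})$ manipulations, together with norm-convergence $u_\alpha.a \to a$, pins down the limit.

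The cleaner and safer argument — and the one I would actually write — avoids the bimodule bookkeeping entirely: reduce to $x$ bounded and to $a \in \C_r^*(G)$, and observe that $(u_\alpha.x)a - xa = (u_\alpha.x - x)a$, and $u_\alpha.x \to x$ holds in the \emph{weak* topology} of $\vn(G)$ (because $\langle u_\alpha.x, v\rangle = \langle x, u_\alpha v\rangle \to \langle x, v\rangle$ for $v \in \A(G)$, as $u_\alpha v \to v$ in $\A(G)$ by the bounded approximate identity property). A weak*-convergent bounded net multiplied on the right by a \emph{fixed} element of $\C_r^*(G)$ does not converge in norm in general, so the real work — and the main obstacle — is upgrading weak* convergence of $(u_\alpha.x)$ to the strict convergence of $(u_\alpha.x)a$. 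I expect this to go through by the following device: write $a$ as a norm-limit of elements of the form $b\lambda(f)c$ (possible since $\C_r^*(G)$ has a bounded approximate identity and by Cohen factorisation $a = b\cdot a'$), move $\lambda(f)$-type factors around using $\cop(\lambda(f)) $ and the compact support of $u_\alpha$, and use that on the relevant compact sets $u_\alpha(\lambda(s)) \to 1$ uniformly. The step that genuinely requires care is controlling $(u_\alpha.x)\lambda(f)$ in norm: this should be handled by writing $\lambda(f) = \lambda(f_1)\lambda(f_2)$ with $f_i \in \C_c(G)$ (Cohen factorisation in $\lone(G)$), using $(u_\alpha.x)\lambda(f_1) \in \C_r^*(G)$ since $x \in \M(\C_r^*(G))$ and $u_\alpha.x \in \M(\C_r^*(G))$, and finally invoking that $\cstarred(G)$-valued nets arising this way converge in norm because of the uniform Følner estimate. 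I would flag at the outset that this last norm-estimate is the crux, and that amenability enters precisely here, through the uniform-on-compacta convergence of $u_\alpha(\lambda(\cdot))$ to $1$.
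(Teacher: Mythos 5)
Your reduction steps are fine and match the paper's opening moves: test the two strict seminorms, use boundedness of $(u_\alpha)$ in $\Bred(G)$ to restrict to $a=\lambda(f)$ with $f$ compactly supported, and note that $u_\alpha.b\to b$ in norm for $b\in\cstarred(G)$ (the paper uses this with $b=xa$). But the heart of the lemma is never proved. Your first route collapses: after writing $(u_\alpha.x)a=(u_\alpha.x)(u_\alpha.a)+(u_\alpha.x)(a-u_\alpha.a)$ you need $(u_\alpha.x)(u_\alpha.a)\to xa$, and since the $\Bred(G)$-action is not multiplicative there is no identity turning $(u_\alpha.x)(u_\alpha.a)$ into $u_\alpha.(\,\cdot\,)$; your text at that point literally reduces to subtracting a term from itself and then gestures at ``slice identities''. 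Note also that the naive bound fails: the relevant difference is $(u_\alpha\ot\id)\bigl(\cop(x)(1\ot a-\cop(a))\bigr)$, and $\|1\ot a-\cop(a)\|$ does not tend to $0$, so no soft norm estimate can close this. Your second route openly defers the same point: you say the crux is controlling $(u_\alpha.x)\lambda(f)$ in norm and that it ``should'' follow from the uniform F{\o}lner estimate, possibly after Cohen factorisation, but writing $\lambda(f)=\lambda(f_1)\lambda(f_2)$ and observing $(u_\alpha.x)\lambda(f_1)\in\cstarred(G)$ gives no convergence by itself. Upgrading the (correct) weak* convergence $u_\alpha.x\to x$ to strict convergence is exactly the content of the lemma, so asserting it is circular.

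What is missing is the quantitative estimate that the paper actually carries out: show $\|(u_\alpha.x)\lambda(f)-u_\alpha.(x\lambda(f))\|\to 0$ for compactly supported $f$ by pairing against an arbitrary $v=\conj{\xi}*\check\eta\in\A(G)$ with $\|v\|=\|\xi\|_2\|\eta\|_2$, rewriting both terms via the unitary $W$ so that the difference is controlled by
\[
\|W(1\ot\lambda(f))(\zeta_\alpha\ot\eta)-(1\ot\lambda(f))W(\zeta_\alpha\ot\eta)\|_2\,\|x\|\,\|\xi\|_2,
\]
and then estimating this $\ltwo$-norm by Minkowski's integral inequality together with $|u F_\alpha\triangle F_\alpha|/|F_\alpha|<\epsilon$ uniformly for $u$ in the support of $f$, yielding a bound of the form $\epsilon^{1/2}\|f\|_1\|\eta\|_2$. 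Taking the supremum over $\|v\|\le 1$ converts this into a norm bound in $\vn(G)$, and density of such $\lambda(f)$ finishes the argument (with the symmetric computation for $a(u_\alpha.x)$). You correctly located where amenability enters, but without this computation the proof has a genuine gap rather than a routine omission.
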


\begin{proof}
We should show that
\[
(u_\alpha . x)a\to xa \quad\text{and}\quad a(u_\alpha .x)\to ax
\]
in norm for every $a$ in $\cstarred(G)$. 
Consider the first case. Now 
\[
\|(u_\alpha . x)a -  xa\|
\le\|(u_\alpha . x)a - u_\alpha . (xa)\|+\|u_\alpha . (xa) -  xa\|,
\]
and since $xa\in \cstarred(G)$ 
\[
\|u_\alpha . (xa) -  xa\|\to 0.
\]
So we are left to show that
\[
\|(u_\alpha . x)a - u_\alpha . (xa)\|\to 0.
\]
Now suppose first that $a = \lambda(f)$ for some $f$ in $\lone(G)$
with compact support $K$. 
Given $\epsilon>0$, choose $\alpha_0$ such that
\[
\frac{|sF_\alpha\triangle F_\alpha|}{|F_\alpha|}<\epsilon 
\]
for every $s$ in $K$ and $\alpha\ge\alpha_0$.

Now every $v\in \A(G)$ is of the form 
$v = \conj{\xi}*\check\eta$ where $\xi,\eta\in \ltwo(G)$ 
are such that $\|v\| = \|\xi\|_2\|\eta\|_2$. 
Then
\begin{align*}
&|\pair{(u_\alpha . x)\lambda(f) - u_\alpha .(x\lambda(f)), v}|\\
&\quad= \bigl|\bigl(
   W^*(1\otimes x)W(1\otimes\lambda(f))(\zeta_\alpha\otimes \eta) 
  - W^*(1\otimes x\lambda(f))W(\zeta_\alpha\otimes \eta) \bigm|
        \zeta_\alpha\otimes\xi\bigr)\bigr|\\
&\quad= \bigl|\bigl(W(1\otimes\lambda(f))(\zeta_\alpha\otimes \eta) -
   (1\otimes\lambda(f))W(\zeta_\alpha\otimes \eta) \bigm|
    (1\otimes x^*)W(\zeta_\alpha\otimes\xi)\bigr)\bigr|\\
&\quad\le 
\|W (1\otimes\lambda(f))(\zeta_\alpha\otimes \eta)
 - (1\otimes\lambda(f))W(\zeta_\alpha\otimes \eta)\|_2
\|x\|\|\xi\|_2.
\end{align*}
Now
\begin{align*}
&\|W (1\otimes\lambda(f))(\zeta_\alpha\otimes \eta)-
   (1\otimes\lambda(f))W(\zeta_\alpha\otimes \eta)\|_2\\
&\quad=\biggl(\iint\biggl|\int
f(u)\zeta_\alpha(ts)\eta(u\inv t) - f(u)\zeta_\alpha(u\inv ts)\eta(u\inv t)\,
du\biggl|^2ds\,dt \biggl)^{1/2}\\
&\quad\le\biggl(\iint\biggl(\int
|f(u)|\,|\zeta_\alpha(ts) - \zeta_\alpha(u\inv ts)|\,|\eta(u\inv t)|\,
du\biggl)^2ds\,dt \biggl)^{1/2}.
\end{align*}
Plug in 
\[
|\zeta_\alpha(ts) - \zeta_\alpha(u\inv ts)| 
= \frac{1_{u F_\alpha\triangle F_\alpha}(ts)}{|F_\alpha|^{1/2}},
\]
and apply Minkowski's integral inequality. 
Then we have 
\begin{align*}
&\|W (1\otimes\lambda(f))(\zeta_\alpha\otimes \eta)-
   (1\otimes\lambda(f))W(\zeta_\alpha\otimes \eta)\|_2\\
&\qquad\le\int\biggl(\iint
|f(u)|^2\,\frac{1_{u F_\alpha\triangle F_\alpha}(ts)}{|F_\alpha|}\,|\eta(u\inv t)|^2\,
ds\,dt \biggl)^{1/2} du\\
&\qquad= \|\eta\|_2 
  \int |f(u)| \frac{|u F_\alpha\triangle F_\alpha|^{1/2}}{|F_\alpha|^{1/2}}\,du
< \epsilon^{1/2} \|\eta\|_2 \|f\|_1
\end{align*}
for every $\alpha\ge \alpha_0$. Therefore
\[
|\pair{(u_\alpha . x)\lambda(f) - u_\alpha .(x\lambda(f)), v}|
< \epsilon^{1/2} \|f\|_1 \|x\| \|v\|
\]
for every $\alpha\ge \alpha_0$. It follows that
\[
\|(u_\alpha . x)\lambda(f) - u_\alpha . (x\lambda(f))\|\to 0
\]
and by approximation 
\[
\|(u_\alpha . x)a  - u_\alpha . (xa)\|\to 0,
\]
as required. 

That also $a(u_\alpha . x)\to a x$ can be proved similarly. 
\end{proof}

The support $\supp x$ of an operator $x\in\vn(G)$ is 
defined as follows: $s\in G$ is in $\supp x$ if and only 
if for every neighbourhood $U$ of $s$ there is 
$v\in \A(G)$ supported by $U$ such that $\pair{x,v}\ne 0$.

\begin{lemma} \label{lemma:X-strict}
Let $G$ be an amenable locally compact group 
and $H$ a closed subgroup of $G$. Then 
\[
\set{x\in \M(\cstarred(G))}{\supp x\sub H} 
= \clsp{\lambda(H)}
\]
where the $\clsp$ denotes the strictly closed linear span.
\end{lemma}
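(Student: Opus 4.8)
The plan is to prove the two inclusions $\clsp{\lambda(H)}\sub Y_H$ and $Y_H\sub\clsp{\lambda(H)}$ separately, where $Y_H:=\set{x\in\M(\cstarred(G))}{\supp x\sub H}$.

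The inclusion $\clsp{\lambda(H)}\sub Y_H$ reduces to checking that $Y_H$ is a strictly closed linear subspace of $\M(\cstarred(G))$ containing each $\lambda(h)$, $h\in H$. Linearity is immediate from $\supp(x+y)\sub\supp x\cup\supp y$, and $\supp\lambda(h)=\{h\}$ follows from $\pair{\lambda(h),v}=v(h)$ together with the regularity of $\A(G)$. For strict closedness, let $x_\alpha\to x$ strictly with each $x_\alpha\in Y_H$, and let $s\notin H$; pick a relatively compact open neighbourhood $U$ of $s$ with $\conj U\cap H=\emptyset$. For $v\in\A(G)$ supported in $U$ the compact set $\supp v$ is disjoint from $H\bus\supp x_\alpha$, so $\pair{x_\alpha,v}=0$ by the standard duality between supports of operators in $\vn(G)$ and supports of functions in $\A(G)$ (an $\A(G)$-partition of unity subordinate to a finite cover of $\supp v$ reduces this to the defining property of $\supp x_\alpha$). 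Since $v\in\A(G)\sub\Bred(G)$ extends strictly continuously, $\pair{x,v}=\lim_\alpha\pair{x_\alpha,v}=0$; as $v$ was arbitrary, $s\notin\supp x$. Hence $\supp x\sub H$, so $Y_H$ is strictly closed.

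For $Y_H\sub\clsp{\lambda(H)}$, take $x\in Y_H$. By Lemma~\ref{lemma:strict-approx}, $u_\alpha.x\to x$ strictly, and since $\pair{u_\alpha.x,v}=\pair{x,u_\alpha v}$ for $v\in\A(G)$ one gets $\supp(u_\alpha.x)\sub\supp u_\alpha\cap\supp x$, which is compact (being contained in $\supp u_\alpha$) and contained in $H$. As $\clsp{\lambda(H)}$ is strictly closed, it suffices to treat $x$ whose support $K$ is compact with $K\sub H$. For such $x$, the Takesaki--Tatsuuma theorem \cite{takesaki-tatsuuma:duality-subgroups} identifies $\set{y\in\vn(G)}{\supp y\sub H}$ with the von Neumann algebra $\vn(H)$ generated by $\lambda(H)$, so $x\in\vn(H)$; and, being compactly supported, $x$ is in fact a multiplier of $\cstarred(H)$, since writing $x=\psi\cdot x$ for the $\A(H)$-module action with $\psi\in\A(H)\cap\C_c(H)$ equal to $1$ near $K$ puts $x$ in $\A(H)\cdot\vn(H)\sub\M(\cstarred(H))$.

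Now choose $\phi_j\in\C_c(H)$ with $\phi_j\ge 0$, $\int_H\phi_j\,dh=1$ and $\supp\phi_j$ shrinking to $\{e\}$, and set $\lambda(\phi_j):=\int_H\phi_j(h)\lambda(h)\,dh\in\cstarred(H)$. Then $(\lambda(\phi_j))$ is a bounded approximate identity of $\cstarred(H)$, and a short computation with the continuity of translation in $\lone(G)$ shows $\lambda(\phi_j)\to 1$ strictly \emph{in $\M(\cstarred(G))$} as well: for $f\in\lone(G)$ one has $\lambda(\phi_j)\lambda(f)=\lambda(g_j)$ with $g_j(t)=\int_H\phi_j(h)f(h\inv t)\,dh\to f$ in $\lone(G)$, and symmetrically on the right. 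Consequently $\lambda(\phi_j)x\to x$ strictly in $\M(\cstarred(G))$, while $\lambda(\phi_j)x\in\cstarred(H)$ because $x$ multiplies $\cstarred(H)$ and $\lambda(\phi_j)\in\cstarred(H)$. Since $\cstarred(H)\sub\clsp{\lambda(H)}$---for $g\in\lone(H)$ the integral $\lambda(g)=\int_H g(h)\lambda(h)\,dh$ is a norm limit of Riemann sums from $\linsp{\lambda(H)}$---we conclude $x\in\clsp{\lambda(H)}$. The delicate point throughout is producing a genuinely \emph{strict} approximation rather than a weak one (a Kaplansky argument in $\vn(H)$ would only give $\sigma$-weak convergence): the reduction to compact support is precisely what lets $x$ be a multiplier of $\cstarred(H)$---with the background input that compactly supported operators in $\vn(H)$ lie in $\A(H)\cdot\vn(H)\sub\M(\cstarred(H))$ being the place where I expect the real work or a citation---and it is essential that $\lambda(\phi_j)\to 1$ strictly in the ambient algebra $\M(\cstarred(G))$, not merely in $\M(\cstarred(H))$.
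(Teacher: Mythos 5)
Your first inclusion and your reduction to compactly supported $x$ via Lemma~\ref{lemma:strict-approx} are in line with the paper, but the core of your second inclusion rests on the claim that a compactly supported operator in $\vn(H)$ is a multiplier of $\cstarred(H)$, i.e.\ that $\A(H)\cdot\vn(H)\sub\M(\cstarred(H))$ --- and you leave exactly this point as ``the place where I expect the real work or a citation''. That is a genuine gap, and it is the hard part of the lemma, not a background fact: compact support does not even place an operator in $\cstarred(H)$ (take $x=\lambda(e)=1$ with $H$ noncompact), and the multiplier assertion is essentially the statement $\ucb(\dual H)\sub\M(\cstarred(H))$, a result of comparable depth to the lemma itself; nothing in this paper supplies it, and the paper's proof is structured precisely so as to avoid it. Indeed, the paper never needs any multiplier property relative to $H$: it uses the standing hypothesis $x\in\M(\cstarred(G))$ (so that $xy,yx\in\cstarred(G)$ automatically for $y\in\cstarred(G)$), takes a bounded net in $\linsp\lambda(H)$ converging to $x$ weak* (double commutant), upgrades this to weak convergence $x_\alpha y\to xy$ in $\cstarred(G)$ by localising with compactly supported functions in $\A(G)$, and then runs Day's convexity argument to extract a single net of convex combinations converging strictly. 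If you can prove or precisely cite the compactly-supported-implies-multiplier step (your $H$ is amenable, so $\A(H)$ has a bounded approximate identity, which should help, and in quantum-group language the claim is the inclusion of the uniformly continuous elements over $\dual H$ into $\M(\cstarred(H))$), the remainder of your argument would give a genuinely different and rather clean proof; as written, the central step is missing. Note also that transferring the support of $x$ from $G$ to $H$, so that $\psi\cdot x=x$ for $\psi\in\A(H)$ equal to $1$ near $K$, tacitly uses Herz's restriction theorem $\A(G)|_H=\A(H)$; this is standard but should be said.

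A secondary, repairable error: the parenthetical justification that $\lambda(g)=\int_H g(h)\lambda(h)\,dh$ is a \emph{norm} limit of Riemann sums is false for nondiscrete $H$, since $h\mapsto\lambda(h)$ is not norm continuous; for $H=\mathbb{R}$ the norm-closed span of $\lambda(\mathbb{R})$ consists of almost periodic elements and meets $\cstarred(\mathbb{R})\cong\C_0(\mathbb{R})$ only in $0$. What is true, and is all you need, is that the Riemann sums converge \emph{strictly} in $\M(\cstarred(G))$: testing against $\lambda(f)$ with $f\in\C_c(G)$ on either side reduces the convergence to norm continuity of translation in $\lone(G)$, exactly as in your argument that $\lambda(\phi_j)\to 1$ strictly. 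With that correction, the conclusion $\pi(\cstarred(H))\sub\clsp\lambda(H)$ stands.
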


\begin{proof}
It follows from Proposition 4.8 of \cite{eymard:fourier}
that the set on the left-hand side of the identity 
is strictly closed.
Since $\supp\lambda(h) = \{h\}$ for every $h$ in $H$, we see that
$\clsp\lambda(H)$ is contained in the set 
on the left-hand side of the identity.
 
Conversely, let $x\in \M(\cstarred(G))$ be supported by $H$. 
Suppose first that $x$ is compactly supported. 
Since $x$ is in the double commutant $\lambda(H)''$, there is a bounded net 
$(x_\alpha)_{\alpha\in I}\sub\linsp\lambda(H)$ 
such that $x_\alpha\to x$ in the weak* topology. 
Since $x$ is compactly supported, we may assume without loss of
generality that there is a compact set $K$ that
is a common support for $x$ and for each $x_\alpha$ 
(otherwise we replace $x_\alpha$ with $u.x_\alpha$ where $u$ is 
a compactly supported function in $\A(G)$ with $u=1$ on a
neighbourhood of the support of $x$). 
Let $y\in \cstarred(G)$ be compactly supported. 
Then $\supp (xy)\sub K\supp y$ and $\supp (x_\alpha y)
\sub K\supp y$ (by \cite[Proposition~4.8]{eymard:fourier}). 
Now there is $u$ in $\A(G)$ such that $u = 1$ on 
a neighbourhood of  the compact set $K\supp y$.
Then, for every $v$ in $\B_r(G)$, the function $uv$ is in $\A(G)$ and so  
\[
\pair{x_\alpha y, v}
= \pair{u.(x_\alpha y), v} 
= \pair{x_\alpha y, uv}
\to \pair{xy, uv} = \pair{xy,v}.
\]
Therefore $x_\alpha y \to xy$ weakly in $\cstarred(G)$
for every compactly supported $y$ in $\cstarred(G)$. 
Since the net $(x_\alpha)$ is bounded, 
we get rid of the restriction of compact support by approximation.
We can deal with the other side similarly, so 
for every $y$ in $\cstarred(G)$
\[
x_\alpha y \to xy\text{ weakly and }
yx_\alpha \to yx\text{ weakly.}
\]

We get from weak convergence to norm convergence 
by a typical convexity argument such as the one presented 
in \cite[p. 524]{day:amenable}.
Since there is the subtlety to obtain 
a \emph{single} net $(z_\beta)$ of convex combinations of $x_\alpha$'s 
that works for \emph{every} $y$ in $\cstarred(G)$,
we repeat here the argument of Day \cite{day:amenable}. 

We say that $(x_\beta')$ is a net of convex combinations 
far out in $(x_\alpha)$ if 
for every $\alpha_0$ there is $\beta_0$ such 
that each $x_\beta'$ with $\beta\ge\beta_0$ 
is a convex combination of $x_\alpha$s with $\alpha\ge \alpha_0$.
Let $F = \{y_1, y_2, \ldots, y_n\}$ be a finite subset of $\C^*_r(G)$.
It follows from the Hahn--Banach theorem that 
there is a net $(x_\beta^1)$ 
of convex combinations far out in $(x_\alpha)$
such that $x_\beta^1 y_1\to xy_1$ in norm. 
Now $x_\beta^1 y_2\to xy_2$ weakly since $(x_\beta^1y_2)$ 
is a net of convex combinations far out in $(x_\alpha y_2)$.
Hence there is a net $(x_\gamma^2)$ 
of convex combinations far out in $(x_\beta^1)$
such that $x_\gamma^2 y_j\to xy_j$ in norm for $j=1,2$. 
Inductively, we get a net $(x^F_\beta)$ of convex combinations 
far out in $(x_\alpha)$ such that $x^F_\beta y \to xy$ in norm
for every $y\in F$. 

Let $\mathcal{F}$ be the collection of all finite subsets of
$\C^*_r(G)$, and recall that we denote the index set of $\alpha$'s by
$I$. Then $\mathcal{F}\times I$ is a directed set where 
$(F_1, \alpha_1)\ge (F_2,\alpha_2)$ if and only if
$F_1\bus F_2$ and $\alpha_1\ge \alpha_2$. 
Now for every $(F,\alpha_0)\in\mathcal{F}\times I$ pick $z(F,\alpha_0)$
such that $z(F,\alpha_0)$ is a convex combination 
of elements $x_{\alpha}$ where $\alpha\ge \alpha_0$
and 
\[
\|z(F,\alpha_0)y-xy\|< \frac1{|F|}
\]
for every $y\in F$ (here $|F|$ denotes the cardinality of $F$).
This choice is possible by the construction above.
Then for every $y$ in $\C^*_r(G)$,
the net $(z(F,\alpha)y)_{(F,\alpha)\in \mathcal{F}\times I}$
converges to $xy$ in norm.
Since $(z(F,\alpha))$ is a net of convex combinations far out 
in $(x_\alpha)$, we have $yz(F,\alpha)\to yx$ weakly for every 
$y\in \C^*_r(G)$, so we may repeat the argument 
to obtain a net $(z_\beta)$ of convex hull of $(x_\alpha)$ 
such that $z_\beta\to x$ in the strict topology.
This completes the case when $x$ is compactly supported.

To deal with the case when the support of $x$ is not necessarily compact,
we need amenability. Let $(u_\alpha)$ be the bounded approximate identity 
from Lemma~\ref{lemma:strict-approx}.  
Since each $u_\alpha$ is compactly supported, so is $u_\alpha.x$.
Moreover, since $x$ is supported by $H$, so is each  $u_\alpha.x$.
By Lemma~\ref{lemma:strict-approx}, $u_\alpha.x\to x$ strictly
so we may apply the earlier part of the proof to $u_\alpha.x$'s
and obtain that $x\in\clsp\lambda(H)$.
\end{proof}

\begin{theorem}
Let $G$ be an amenable locally compact group.
There is a one-to-one correspondence between 
closed subgroups $H$ of $G$ and 
strictly closed, invariant C*-subalgebras $X$ of $\M(\cstarred(G))$:
\begin{align}
X &= \set{x\in \M(\cstarred(G))}{\supp x\sub H} \label{eq:X2} \\
H &= \set{s\in G}{\lambda(s)\in X} \label{eq:H2}.
\end{align}
\end{theorem}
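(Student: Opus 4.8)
The plan is to mirror the structure of the proof of the first theorem, using Lemma~\ref{lemma:X-strict} as the analogue of the $*$-isomorphism $\pi$ with $\C_b(G/H)$ in that argument. First I would check the easy direction: given a closed subgroup $H$, set $X = \set{x\in\M(\cstarred(G))}{\supp x\sub H}$. By Lemma~\ref{lemma:X-strict} this equals $\clsp\lambda(H)$, which is a strictly closed linear span of a set of unitaries closed under multiplication and adjoints (since $\lambda(h)\lambda(k) = \lambda(hk)$ and $\lambda(h)^* = \lambda(h\inv)$ for $h,k\in H$); hence $X$ is a strictly closed C*-subalgebra. Invariance follows because $u.\lambda(h) = (u\ot\id)(\lambda(h)\ot\lambda(h)) = u(h)\lambda(h)\in\clsp\lambda(H)$ for $u\in\Bred(G)$, and the action of each $u$ is strictly continuous (as $u\ot\id$ is, by construction), so $u.X\sub X$.

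Next I would address the reverse direction: given a strictly closed invariant C*-subalgebra $X$, define $H$ by \eqref{eq:H2} and show $H$ is a closed subgroup. Closedness is clear since $\lambda$ is a homeomorphism onto its image in the strict (indeed norm) topology on unitaries; that $H$ is a subgroup uses that $X$ is a C*-subalgebra ($\lambda(s),\lambda(t)\in X$ gives $\lambda(st) = \lambda(s)\lambda(t)\in X$ and $\lambda(s\inv) = \lambda(s)^*\in X$). Then I must prove $X = \set{x}{\supp x\sub H}$, i.e.\ $X = \clsp\lambda(H)$ by Lemma~\ref{lemma:X-strict}. The inclusion $\clsp\lambda(H)\sub X$ is immediate: $\lambda(H)\sub X$ by definition of $H$, and $X$ is strictly closed and linear. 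The substantial inclusion is $X\sub\clsp\lambda(H)$. For this I expect to need a support argument: I would show that if $x\in X$ then $\supp x\sub H$. Given $s\in\supp x$, I want $\lambda(s)\in X$. The idea is to use invariance to ``localise'' $x$ near $s$: using the action of compactly supported $u\in\A(G)$ (as in Lemma~\ref{lemma:X-strict}) one can restrict the support of elements of $X$ to small neighbourhoods of $s$ while staying in $X$, and then pass to a limit. The dual analogue of ``$f(s)\ne f(t)$ for some $f\in X$ forces $s\inv t\in H$'' is that the Fourier algebra $\A(G)$ separates points together with the characterisation of $H$ via supports.

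The hard part will be this last step, extracting $\supp x\sub H$ from $x\in X$ together with invariance. One approach: suppose $s\in\supp x\sm H$; pick disjoint open neighbourhoods $U$ of $s$ and $V$ of $H$ (using that $H$ is closed), and pick $u\in\A(G)$ compactly supported with $u=1$ on a neighbourhood of $s$ and $\supp u\sub U$. Then $u.x$ has support contained in $U\supp(\text{stuff})$ — more precisely one would want the support of $u.x$ confined near $s$ and bounded away from $H$, while $u.x\in X$ by invariance; since $\supp(u.x)$ meets $s$ one gets a nonzero element of $X$ supported off $H$, and then a translation/averaging argument along a net contracting to $s$ should produce a scalar multiple of $\lambda(s)$ in $X$, contradicting $s\notin H$. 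I would lean on \cite[Proposition~4.8]{eymard:fourier} for the support calculus $\supp(u.x)\sub\supp u\cdot\supp x$ (or the relevant variant) throughout. Once $X = \clsp\lambda(H)$ is established for the $H$ derived from $X$, the bijectivity of the two assignments follows formally exactly as in the first theorem: starting from $H$, the $X$ of \eqref{eq:X2} recovers $H$ via \eqref{eq:H2} because $\supp\lambda(s) = \{s\}$, so $\lambda(s)\in\clsp\lambda(H)$ forces $s\in H$ by Lemma~\ref{lemma:X-strict}; and starting from $X$ we have just shown $X$ is recovered.
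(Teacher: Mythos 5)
Your overall strategy is the same as the paper's: everything reduces, via Lemma~\ref{lemma:X-strict}, to showing $X=\clsp\lambda(H)=\set{x\in\M(\cstarred(G))}{\supp x\sub H}$, the substantial point being that $x\in X$ forces $\supp x\sub H$. But there is a genuine gap at exactly that point. The localisation step is fine ($u.x\in X$ by invariance, $\supp(u.x)\sub\supp u\cap\supp x$, and $s\in\supp(u.x)$ when $u=1$ near $s$), but the concluding step --- ``a translation/averaging argument along a net contracting to $s$ should produce a scalar multiple of $\lambda(s)$ in $X$'' --- is where the real work lies, and as stated it does not go through. What Eymard's support theory (cited in the paper) provides is a net $(v_\alpha)$ in $\A(G)$ with $v_\alpha.x\to\lambda(s)$ in the \emph{weak*} topology of $\vn(G)$. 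Since $X$ is only assumed \emph{strictly} closed, and strict closures are in general much smaller than weak* closures (compare $\clsp\lambda(H)$ with the weak* closure $\lambda(H)''$), weak* convergence of the net $v_\alpha.x\in X$ does not by itself place $\lambda(s)$ in $X$. The paper bridges this by the Day convex-combination technique developed in the proof of Lemma~\ref{lemma:X-strict}: one first checks, using the compact-support trick (a function of $\A(G)$ equal to $1$ on a neighbourhood of $K\supp y$), that $(v_\alpha.x)a\to\lambda(s)a$ and $a(v_\alpha.x)\to a\lambda(s)$ weakly for every $a\in\cstarred(G)$, and then extracts a single net of convex combinations of the $v_\alpha.x$ converging \emph{strictly} to $\lambda(s)$; these convex combinations remain in the strictly closed subspace $X$, so $\lambda(s)\in X$ and hence $s\in H$. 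Your sketch never addresses this weak*-to-strict passage, which is precisely the part of the theorem not already contained in Lemma~\ref{lemma:X-strict}.

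Two smaller points. The closedness of $H$ is not ``clear'' the way you state it: $\lambda$ is not norm continuous unless $G$ is discrete, so the parenthetical ``indeed norm'' is wrong; the correct argument (as in the paper) is that $s\mapsto\lt_sf$ and $s\mapsto\rt_sf$ are norm continuous on $\lone(G)$, hence $s\mapsto\lambda(s)\lambda(f)$ and $s\mapsto\lambda(f)\lambda(s)$ are norm continuous, so $s\mapsto\lambda(s)$ is strictly continuous and $H=\lambda\inv(X)$ is closed because $X$ is strictly closed. Also, in the forward direction, the claim that $\clsp\lambda(H)$ is a C*-algebra needs a (routine) two-step argument using separate strict continuity of multiplication and strict continuity of the adjoint; alternatively one can verify directly that $\set{x}{\supp x\sub H}$ is a strictly closed invariant C*-subalgebra from the support calculus of Eymard, without invoking Lemma~\ref{lemma:X-strict} at all. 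These are minor compared with the missing weak*-to-strict argument above.
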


\begin{proof}
If $H$ is given, then \eqref{eq:X2} defines a 
strictly closed, invariant C*-subalgebra $X$ of $\M(\C^*(G))$.
Moreover, $X = \clsp\lambda(H)$ by Lemma~\ref{lemma:X-strict}, 
which implies that the application of \eqref{eq:H2} rediscovers $H$. 

Conversely, let $X$ be a strictly closed, invariant C*-subalgebra
of  $\M(\cstarred(G))$, and define $H$ by \eqref{eq:H2}.
Then $H$ is obviously a subgroup and it is not difficult to 
see that $H$ is closed. Indeed, if $f\in\lone(G)$, then 
$s\mapsto \lt_sf$ and $s\mapsto \rt_s f$ are  
continuous maps from $G$ to $\lone(G)$. Therefore 
$s\mapsto \lambda(s)\lambda(f)$ and  $s\mapsto \lambda(f)\lambda(s)$ 
are continuous, and hence $s\mapsto \lambda(s)$ 
is strictly continuous. Since $X$ is strictly closed, it follows 
that $H$ is closed.

Put
\[
Y = \set{x\in \M(\cstarred(G))}{\supp x\sub H}.
\]
It follows from Lemma~\ref{lemma:X-strict} that $Y\sub X$.
On the other hand, let $x\in X$ such that $\supp x$ is 
compact. For every  $s\in \supp x$,
there is a net $(v_\alpha)$ of functions in $\A(G)$ 
such that $v_\alpha. x\to \lambda(s)$ in the weak* topology
(\cite{eymard:fourier}). Now $v_\alpha. x\in X$ by 
invariance. The technique used in the proof of 
Lemma~\ref{lemma:X-strict} allows us to pass from weak* topology
to strict topology, and so it follows that $\lambda(s)$ is
in $X$.  Therefore $\supp x\sub H$; that is, $x\in Y$. 
The general case, when $\supp x$ is not necessarily compact,
is covered by using the bounded approximate identity in $\A(G)$ 
from Lemma~\ref{lemma:strict-approx}.
Hence $Y = X$, which also shows that passing from $X$ to $H$ 
and then applying \eqref{eq:X2} takes us back to~$X$. 
\end{proof}

It seems reasonable to conjecture that given a closed subgroup $H$ 
of an amenable locally compact group $G$, the C*-algebra
\[
X = \set{x\in \M(\cstarred(G))}{\supp x\sub H}
\]
is canonically $*$-isomorphic to $\M(\cstarred(H))$. 
We end this note with some comments on this conjecture. 

First of all, let us embed $\M(\cstarred(H))$ into $\M(\cstarred(G))$. 
There is a normal injective $*$-homomorphism
$\pi\col \vn(H)\to \vn(G)$ which maps each 
$\lambda_H(h)$ to $\lambda_G(h)$ 
(see \cite{takesaki-tatsuuma:duality-subgroups2} and 
\cite{herz:synthesis}).
Restricted to $\M(\cstarred(H))$ this map is 
a strictly continuous injective $*$-homomorphism.
(Indeed, $\pi$ defines a unitary representation of $H$ on $\ltwo(G)$,
which leads to a nondegenerate, faithful representation of 
$\cstarred(H) = \C^*(H)$ on $\ltwo(G)$. 
Due to nondegeneracy, its strict extension  
from $\M(\cstarred(H))$ into $\M(\cstarred(G))$
agrees with the normal $*$-homomorphism $\pi$ that we started with.)

So we have an embedding $\pi\col \M(\cstarred(H))\to \M(\cstarred(G))$.
Since the image of $\pi$ is strictly dense in $X$ 
by Lemma~\ref{lemma:X-strict}, for $\pi\col \M(\cstarred(H)) \to X$ 
to be a $*$-isomorphism, it suffices that the range of $\pi$ 
is strictly closed. Whether that is the case is unknown (to me).
However, the stronger statement that 
$\pi$ is a strict homeomorphism onto its range is certainly false. 
To see this, suppose that $\pi$ 
is a strict homeomorphism onto its range.
Then, by the Hahn--Banach theorem, 
every strictly continuous functional on $\pi(\M(\cstarred(H))$
extends to a strictly continuous functional on $\M(\cstarred(G))$.
That is, for every $u$ in $\B(H)$, there is $v$ in $\B(G)$ such that
$u = v\circ \pi$ as functionals. Evaluating 
at $\lambda(h)$ says that $u(h) = v(h)$.
Therefore every function in $\B(H)$ extends to a $\B(G)$ function.
It is known that this is not true in general, for example 
for the affine group on the real line 
\cite{eymard:fourier,carling:restriction-to-FS}. 

The problem whether the range of $\pi$ is stricly closed 
can be reduced to the elements outside $\ucb(\dual G)$,
the norm closure of compactly supported elements in $\vn(G)$.
Suppose that $(x_\alpha)$ is a net in $\pi(\M(\C_r^*(H)))$ 
converging strictly to $x$ in $\M(C_r^*(G))$.
If $x$ has compact support, then an argument similar 
to the proof of Lemma~\ref{lemma:X-strict} shows that 
$x$ is in fact in the range of $\pi$. 
Approximation in norm extends this observation to all 
$x\in\ucb(\dual G)$. 

\begin{acknow}
This work was insprired by a question posed by Matthias 
Neufang during the special semester on
`Banach algebra and operator space techniques in topological group
theory' at the University of Leeds 
in May/June 2010, funded by the EPSRC grant EP/I002316/1.
I thank the organisers of that semester for generous hospitality. 
I thank Nico Spronk for support during my stay at University 
of Waterloo, where part of this work was carried out. 
\end{acknow}

\providecommand{\bysame}{\leavevmode\hbox to3em{\hrulefill}\thinspace}

\end{document}